\documentclass[12pt]{article}
\usepackage{amsfonts}
\usepackage{amsmath,amsthm}
\usepackage{amssymb}
\usepackage{eucal,color,graphicx}
\usepackage[bookmarksnumbered, colorlinks, plainpages]{hyperref}
\newtheorem{Theorem}{Theorem}
\newtheorem{Lemma}[Theorem]{Lemma}
\newtheorem{Problem}[Theorem]{Problem}

\textwidth 6.6in \textheight 8.6in \footskip 0.3in
\parskip 0.06in

\oddsidemargin 0in \evensidemargin 0in

\begin{document}

\baselineskip 17pt
\title{\textbf{On a problem from the Kourovka Notebook}\thanks{The author is supported by an NNSF of China (grant No. 11371335), the Start-up Scientific Research Foundation of Nanjing Normal University (grant No. 2015101XGQ0105) and a project funded by the Priority Academic Program Development of Jiangsu Higher Education Institutions.}}

\author{{Xiaoyu Chen}\\
{\small School of Mathematical Sciences and Institute of Mathematics, Nanjing Normal University,}\\
{\small Nanjing 210023, P. R. China}\\
{\small E-mail: jelly@njnu.edu.cn}}
\date{}
\maketitle

\begin{abstract}
In this manuscript, a solution to Problem 18.91(b) in the Kourovka Notebook is given by proving the following theorem. Let $P$ be a Sylow $p$-subgroup of a group $G$ with $|P| = p^n$. Suppose that there is an integer $k$ such that $1 < k < n$ and every subgroup of $P$ of
order $p^k$ is $S$-propermutable in $G$, and also, in the case that $p=2$, $k = 1$ and $P$ is non-abelian, every cyclic subgroup
of $P$ of order $4$ is $S$-propermutable in $G$. Then $G$ is $p$-nilpotent.\par
\end{abstract}
\renewcommand{\thefootnote}{\empty}
\footnotetext{Keywords: Finite group, Propermutable subgroups, $S$-propermutable subgroups, $p$-nilpotence.}
\footnotetext{Mathematics Subject Classification (2010): 20D10, 20D20.}

Recall that a subgroup $H$ of a group $G$ is said to be \textit{propermutable} (resp. \textit{$S$-propermutable}) \cite{Yi} in $G$, if there is a subgroup $B$ of $G$ such that $G = N_G(H)B$ and $H$ permutes with every subgroup (resp. Sylow subgroup) of $B$. The aim of this manuscript is to give a solution to the following problem proposed by A. N. Skiba in the Kourovka Notebook.

\begin{Problem} $($see Problem $18.91(b)$ in \textup{\cite{Maz}}.$)$ Let $P$ be a non-abelian Sylow $2$-subgroup of a group $G$ with $|P| = 2^n$.
Suppose that there is an integer $k$ such that $1 < k < n$ and every subgroup of $P$ of
order $2^k$ is propermutable in $G$, and also, in the case of $k = 1$, every cyclic subgroup
of $P$ of order $4$ is propermutable in $G$. Is it true that then $G$ is $2$-nilpotent?\par\end{Problem}


A subgroup $H$ of a group $G$ is said to satisfy \textit{$\Pi$-property} \cite{Li} in $G$ if for every chief factor $L/K$ of $G$, $|G/K:N_{G/K}(HK/K\cap L/K)|$ is a $\pi(HK/K\cap L/K)$-number. Now we can establish the relationship between $S$-propermutable subgroups and subgroups which satisfy $\Pi$-property.\par

\begin{Lemma} If a $p$-subgroup $H$ is $S$-propermutable in a group $G$, then $H$ satisfies $\Pi$-property in $G$.
\end{Lemma}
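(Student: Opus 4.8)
The plan is to reduce the statement, one chief factor at a time, to an assertion about minimal normal subgroups, and then to move that assertion inside the supplement $B$. First I would check that being an $S$-propermutable $p$-subgroup is inherited by quotients: if $N\trianglelefteq G$ and $G=N_G(H)B$ with $H$ permuting with every Sylow subgroup of $B$, then each Sylow subgroup of $BN/N$ has the shape $B_qN/N$ for a Sylow subgroup $B_q$ of $B$, and $(HN/N)(B_qN/N)=(HB_q)N/N$ is a subgroup since $HB_q$ is; as moreover $G/N=(N_G(H)N/N)(BN/N)\le N_{G/N}(HN/N)(BN/N)$, the subgroup $HN/N$ is $S$-propermutable in $G/N$. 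Fixing a chief factor $L/K$, passing to $G/K$, and using $HK/K\cap L/K=(HK\cap L)/K$ together with $N_{G/K}((HK\cap L)/K)=N_G(HK\cap L)/K$, I may assume $K=1$, so that $L$ is a minimal normal subgroup of $G$ and it remains to show that $|G:N_G(H\cap L)|$ is a $\pi(H\cap L)$-number. If $H\cap L=1$ this is trivial; otherwise, since $H$ is a $p$-group, $\pi(H\cap L)=\{p\}$ and $p\mid|L|$, and I must prove $|G:N_G(H\cap L)|$ is a $p$-number.

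The second step passes to $B$. Since $L\trianglelefteq G$, every element normalizing $H$ also normalizes $H\cap L$, hence $N_G(H)\le N_G(H\cap L)$ and so $G=N_G(H)B=N_G(H\cap L)B$, giving $|G:N_G(H\cap L)|=|B:N_B(H\cap L)|$. It therefore suffices to show that for each prime $q\ne p$ some Sylow $q$-subgroup of $B$ normalizes $H\cap L$. Fix such a $q$ and a Sylow $q$-subgroup $Q$ of $B$; by hypothesis $HQ$ is a subgroup, and it is a $\{p,q\}$-group. Put $M=HQ\cap L\trianglelefteq HQ$. Because $MH$ is a subgroup of $HQ$ and $M\cap H=H\cap L$, comparing orders shows $|M:H\cap L|$ is a $q$-number, so $H\cap L$ is a Sylow $p$-subgroup of $M$; then Frattini's argument gives $HQ=M\,N_{HQ}(H\cap L)$, whence $|HQ:N_{HQ}(H\cap L)|$ equals the number of Sylow $p$-subgroups of $M$, which is a $q$-number. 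In particular, as soon as $M$ has a normal Sylow $p$-subgroup we get $N_{HQ}(H\cap L)=HQ$, i.e.\ $Q$ normalizes $H\cap L$.

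This already disposes of the case when $L$ is abelian: then, $p$ dividing $|L|$, the minimal normal subgroup $L$ is an elementary abelian $p$-group, so $M\le L$ is a $p$-group, $H\cap L=M\trianglelefteq HQ$, and $Q$ normalizes $H\cap L$; letting $q$ range over all primes different from $p$ we conclude that $|B:N_B(H\cap L)|$, and hence $|G:N_G(H\cap L)|$, is a $p$-number. The hard part will be the case $L=S_1\times\cdots\times S_m$ with the $S_i$ non-abelian simple and $p\mid|S_i|$, for then $M=HQ\cap L$ need not be a $p$-group and one must show it can nevertheless be arranged to be $p$-closed.

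I would attack this last case by exploiting three things. First, $HQ$, being a $\{p,q\}$-group, is solvable by Burnside's theorem, so $M\trianglelefteq HQ$ is a solvable $\{p,q\}$-group whose projections to the factors $S_i$ are solvable and are permuted by $HQ$. Second, the minimality of $L$ forces $L=\langle(H\cap L)^G\rangle=\langle(H\cap L)^B\rangle$, so $L$ is generated by $B$-conjugates of the $p$-group $H\cap L$. Third, I am free to replace $Q$ by any conjugate $Q^b$ with $b\in B$ (each $HQ^b$ being again a subgroup), which replaces $M$ by $HQ^b\cap L$ without conjugating $H$. The aim would be to choose $b$ so that $HQ^b\cap L$ is $p$-closed, and, failing that, to force $H\cap L=1$, a contradiction. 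I expect this matching of $B$-conjugates of $H\cap L$ with $HQ^b$-invariant subgroups of $L$ to be the technical heart of the proof; the rest is bookkeeping.
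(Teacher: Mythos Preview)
Your reduction to a minimal normal subgroup and your treatment of the abelian case are correct and essentially coincide with the paper's (the paper cites \cite[Lemma~2.3(1),(4)]{Yi} and \cite[Proposition~2.1(1)]{Li} for the inheritance facts you verify by hand). The genuine gap is the non-abelian case, which you leave as a programme. The three tools you list --- solvability of $M=HQ\cap L$, the equality $L=\langle(H\cap L)^B\rangle$, and the freedom to replace $Q$ by $Q^b$ --- do not obviously combine to force $M$ to be $p$-closed, and you give no mechanism by which they would; nor do they by themselves yield $H\cap L=1$. So the ``technical heart'' you anticipate is in fact the whole remaining proof, and your sketch does not indicate how to fill it.

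The paper's argument takes a sharper route that bypasses your $p$-closure problem entirely: it shows directly that $H\cap N=1$ whenever the minimal normal subgroup $N$ is non-abelian, so your hard case is vacuous. The missing idea is to transfer the permutability from $B$ to $N$. One first observes $N\le H^G\le HB$ (otherwise $H\cap N=1$ already); then any Sylow $q$-subgroup $N_q$ of $N$ (for $q\ne p$) extends to a Sylow $q$-subgroup of $HB$, so $N_q=(B_q)^h\cap N$ for some Sylow $q$-subgroup $B_q$ of $B$ and some $h\in H$, whence $(H\cap N)N_q=H(B_q)^h\cap N$ is a subgroup. Thus $H\cap N$ permutes with \emph{every} Sylow $q$-subgroup of $N$. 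Since $(H\cap N)N_q$ is a proper $\{p,q\}$-subgroup of the semisimple group $N$, and a Sylow $q$-subgroup of $N$ projects nontrivially onto every simple direct factor while proper normal subgroups of $N$ omit at least one factor, one concludes that $H\cap N$ lies in a proper normal subgroup $L$ of $N$; the same permutability holds for Sylow $q$-subgroups of $L$, and iterating the descent gives $H\cap N=1$.
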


\begin{proof} In view of \cite[Lemma 2.3(1)]{Yi} and \cite[Proposition 2.1(1)]{Li}, we only need to prove that $|G:N_G(H\cap N)|$ is a $p$-number for any minimal normal subgroup $N$ of $G$ by induction. If $N$ is abelian, then $|G:N_G(H\cap N)|$ is a $p$-number by \cite[Lemma 2.3(4)]{Yi}. We may, therefore, assume that $N$ is non-abelian.\par
Now we shall prove that $H\cap N=1$. As $H$ is $S$-propermutable in $G$, $G$ has a subgroup $B$ such that $G=N_G(H)B$  and $H$ permutes with every Sylow subgroup of $B$. Clearly, $H^G\leq HB$. If $H^G\cap N=1$, then $H\cap N=1$. Hence we may assume that $N\leq H^G\leq HB$. Then for any Sylow $q$-subgroup $N_q$ of $N$ with $q\neq p$, $B$ has a Sylow $q$-subgroup $B_q$ such that $N_q=(B_q)^h\cap N$ for some $h\in H$. It follows that $H(B_q)^h\cap N=(H\cap N)((B_q)^h\cap N)=(H\cap N)N_q$, and so $H\cap N$ permutes with $N_q$. Since $N\neq (H\cap N)N_q$, $N$ has a proper normal subgroup $L$ such that either $H\cap N\leq L$ or $N_q\leq L$. Then evidently, we have that $H\cap N\leq L$. Note that for any Sylow $q$-subgroup $L_q$ of $L$, $H\cap N$ permutes with $L_q$. Repeating this argument, we can obtain that $H\cap N=1$. The lemma is thus proved.
\end{proof}

We arrive at the following main result.

\begin{Theorem}
Let $P$ be a Sylow $p$-subgroup of a group $G$ with $|P| = p^n$. Suppose that there is an integer $k$ such that $1 < k < n$ and every subgroup of $P$ of
order $p^k$ is $S$-propermutable in $G$, and also, in the case that $p=2$, $k = 1$ and $P$ is non-abelian, every cyclic subgroup
of $P$ of order $4$ is $S$-propermutable in $G$. Then $G$ is $p$-nilpotent.\end{Theorem}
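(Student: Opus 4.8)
The plan is to move from $S$-propermutability to the $\Pi$-property by means of the Lemma, and then to invoke a $p$-nilpotency criterion phrased in those terms. By the Lemma, every subgroup of $P$ of order $p^{k}$ satisfies the $\Pi$-property in $G$, and in the exceptional configuration ($p=2$, $k=1$, $P$ non-abelian) so does every cyclic subgroup of $P$ of order $4$; hence it suffices to prove the same statement with ``$S$-propermutable'' replaced everywhere by ``satisfies the $\Pi$-property''. If such a criterion is already available in the literature on the $\Pi$-property, the theorem follows at once, and this is precisely the purpose of the Lemma. Otherwise I would argue by induction on $|G|$, as sketched below, still drawing on the full strength of $S$-propermutability at one or two decisive points.

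Let $G$ be a counterexample of least order. Passing to $\overline{G}=G/O_{p'}(G)$: a Sylow $p$-subgroup of $\overline{G}$ is isomorphic to $P$ (since $P\cap O_{p'}(G)=1$), the subgroups of order $p^{k}$ and the cyclic subgroups of order $4$ of its Sylow $p$-subgroup are the images of those of $P$, and $S$-propermutability passes to this quotient by the cited lemmas of \cite{Yi}; minimality forces $O_{p'}(G)=1$, so every minimal normal subgroup $N$ of $G$ has order divisible by $p$. The heart of the argument is to exclude a non-abelian $N=S_{1}\times\cdots\times S_{t}$, with the $S_{i}$ non-abelian simple and $p$ dividing $|S_{i}|$. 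Choose $P$ with $P\cap N\in\mathrm{Syl}_{p}(N)$ and a subgroup $H\le P$ of order $p^{k}$ with $1\ne H\cap N$ (possible: take a subgroup of order $p$ in $P\cap N$ and enlarge it inside $P$). Applying the $\Pi$-property to the chief factor $N$ of $G$, exactly as in the proof of the Lemma, gives that $|G:N_{G}(H\cap N)|$ is a $p$-number, hence that the normaliser in $N$ of the nontrivial $p$-subgroup $H\cap N$ has $p$-power index. If $|P\cap N|\le p^{k}$ one may take $H\cap N=P\cap N$, whence $P\cap N\trianglelefteq N$, contradicting that $N$ has no nontrivial normal $p$-subgroup. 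In the remaining case $|P\cap N|>p^{k}$ one obtains a proper subgroup of $N$ of $p$-power index covering a full Hall $p'$-subgroup of $N$, and, after projecting onto the simple factors, a corresponding restriction on the $S_{i}$ that is forbidden by the CFSG-based description of simple groups possessing a subgroup of prime-power index. Hence $N$ is an elementary abelian $p$-group and $N\le O_{p}(G)$.

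With such an $N$ fixed, I would finish by descent. One checks that $G/N$ inherits the hypothesis (a routine verification, delicate only when $|P/N|\le p^{k}$, where one argues directly via a Frobenius-type normal $p$-complement criterion together with the $p$-nilpotence of $N_{G}(P)$); here the full force of $S$-propermutability enters -- from $G=N_{G}(H)B$ with $H$ permuting with every Sylow subgroup of $B$ one extracts a nontrivial $p'$-subgroup normalising a suitable $H\le P$ -- to get that $N_{G}(P)$ is $p$-nilpotent. By induction $G/N$ then has a normal $p$-complement $M/N$, so $M\trianglelefteq G$, $N\in\mathrm{Syl}_{p}(M)$, and $G/M$ is a $p$-group; since $M$ is again covered by the hypothesis (using a persistence lemma of \cite{Yi} for $S$-propermutability in normal subgroups, again handling the small cases separately), induction gives $M$ a normal $p$-complement, which is characteristic in $M$ and hence normal in $G$ -- a normal $p$-complement of $G$, contradicting the choice of $G$.

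The main obstacle is unquestionably the non-abelian minimal normal subgroup step: the $\Pi$-property delivers only the arithmetic fact that a certain normaliser index is a power of $p$, and converting this into a genuine structural impossibility inside a product of non-abelian simple groups forces one to lean on CFSG (either the list of simple groups with a subgroup of prime-power index, or a sharper use of the actual supplement $B$ afforded by $S$-propermutability). A secondary, purely technical difficulty is that $k$ is a \emph{fixed} parameter, so the induction degenerates once the ambient Sylow $p$-subgroup falls to order at most $p^{k}$; disposing of those residual configurations by hand is exactly what the separate $p=2$, $k=1$ hypothesis is there for.
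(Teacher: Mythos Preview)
Your opening paragraph is precisely the paper's proof: use the Lemma to pass from $S$-propermutability to the $\Pi$-property, and then invoke an existing criterion in the literature---the paper cites the Main Theorem of Su--Li--Wang \cite{Su}---to conclude $p$-nilpotence immediately. The contingency induction you sketch afterwards is therefore unnecessary (and indeed contains a loose end: $S$-propermutability in $G$ is not known to descend to a normal subgroup $M$, so the step ``$M$ is again covered by the hypothesis'' would need separate justification), but the primary route you identify is exactly what the paper does.
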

\begin{proof}
In fact, by Lemma 2, this theorem follows directly from the results by using the concept of $\Pi$-property, for example, the Main Theorem in \cite{Su}.
\end{proof}

Clearly, propermutable subgroups are $S$-propermutable in $G$, and thus Problem 1 has a positive answer due to Theorem 3.

\end{document}